\newcommand{\e}{\mathds{1}}
\numberwithin{equation}{subsection}
\newtheorem{thm}{Theorem}[subsubsection]
\newtheorem*{thm*}{Theorem}
\newtheorem*{cor*}{Corollary}
\newtheorem{lem}[thm]{Lemma}
\newtheorem{prop}[thm]{Proposition}
\newtheorem{prop-const}[thm]{Proposition-Construction}
\newtheorem*{conjecture*}{Conjecture}
\newtheorem*{princ*}{Principle}
\theoremstyle{remark}
\newtheorem{rem}[thm]{Remark}
\newtheorem{defin}[thm]{Definition}
\newcommand{\xar}[1]{\xrightarrow{#1}}
\newcommand{\rar}[1]{\xar{#1}}
\newcommand{\isom}{\rar{\simeq}}
\newcommand{\into}{\hookrightarrow}
\newcommand{\Loc}{\on{Loc}}
\newcommand{\bB}{{\mathbb B}}
\newcommand{\bG}{{\mathbb G}}
\newcommand{\bH}{{\mathbb H}}
\newcommand{\bS}{{\mathbb S}}
\newcommand{\sC}{{\EuScript C}}
\newcommand{\sD}{{\EuScript D}}
\newcommand{\sE}{{\EuScript E}}
\newcommand{\sF}{{\EuScript F}}
\newcommand{\sG}{{\EuScript G}}
\newcommand{\sH}{{\EuScript H}}
\newcommand{\sM}{{\EuScript M}}
\newcommand{\sS}{{\EuScript S}}
\newcommand{\sV}{{\EuScript V}}
\newcommand{\sW}{{\EuScript W}}
\newcommand{\sY}{{\EuScript Y}}
\newcommand{\fA}{{\mathfrak A}}
\newcommand{\fL}{{\mathfrak L}}
\newcommand{\fT}{{\mathfrak T}}
\newcommand{\on}{\operatorname}
\newcommand{\mathendash}{\text{\textendash}}
\newcommand{\Ker}{\on{Ker}}
\newcommand{\Aut}{\on{Aut}}
\newcommand{\Spec}{\on{Spec}}
\newcommand{\id}{\on{id}}
\newcommand{\Rep}{\mathsf{Rep}}
\newcommand{\act}{\on{act}}
\newcommand{\actson}{\curvearrowright}
\newcommand{\coact}{\on{coact}}
\renewcommand{\dot}{\bullet}
\newcommand{\vph}{\varphi}
\newcommand{\Vect}{\mathsf{Vect}}
\newcommand{\Gr}{\on{Gr}}
\newcommand{\Whit}{\mathsf{Whit}}
\newcommand{\Bun}{\on{Bun}}
\newcommand{\LocSys}{\on{LocSys}}
\renewcommand{\mod}{\mathendash\mathsf{mod}}
\newcommand{\DGCat}{\mathsf{DGCat}}
\renewcommand{\lim}{\on{lim}}
\newcommand{\TwoHom}{\mathsf{Hom}}
\newcommand{\heart}{\heartsuit}
\newcommand{\Hecke}{\on{Hecke}}
\newcommand{\IndCoh}{\mathsf{IndCoh}}
\newcommand{\QCoh}{\mathsf{QCoh}}
\renewcommand{\subset}{\subseteq}
\newcommand{\AT}{\fA\fT}
\newcommand{\Ran}{\on{Ran}}
\newcommand{\Lisse}{\mathsf{Lisse}}
\newcommand{\Shv}{\mathsf{Shv}}
\newcommand{\temp}{\mathendash\on{temp}}
\newcommand{\Nilp}{\on{Nilp}}
\newcommand{\biggg}{\bBigg@{4}}
\newcommand{\Biggg}{\bBigg@{5}}
\date{\today}
\begin{document}

\frenchspacing

\setlength{\epigraphwidth}{0.4\textwidth}
\renewcommand{\epigraphsize}{\footnotesize}

\begin{abstract}

Arinkin and Gaitsgory defined a category of \emph{tempered} $D$-modules
on $\Bun_G$ that is conjecturally equivalent to the category of
quasi-coherent (not ind-coherent!) sheaves on $\LocSys_{\check{G}}$.
However, their definition depends on the auxiliary data of a point of
the curve; they conjectured that their definition is independent of this
choice. Beraldo has outlined a proof of this conjecture that
depends on some technology that is not currently available. 
Here we provide a short, unconditional proof of the Arinkin-Gaitsgory conjecture.

\end{abstract}

\title{The Arinkin-Gaitsgory temperedness conjecture}

\author{Joakim F\ae rgeman}

\address{The University of Texas at Austin, 
Department of Mathematics, 
RLM 8.100, 2515 Speedway Stop C1200, 
Austin, TX 78712}


\author{Sam Raskin}

\address{The University of Texas at Austin, 
Department of Mathematics, 
RLM 8.100, 2515 Speedway Stop C1200, 
Austin, TX 78712}

\email{sraskin@math.utexas.edu}

\maketitle

\setcounter{tocdepth}{1}
\tableofcontents

\section{Introduction}

\subsection{Statement of the main theorem}

\subsubsection{}

Let $X$ be a geometrically connected, smooth, and projective curve over a field
$k$ of characteristic $0$. Let $G$ be a split reductive group over
$k$. Let $\Bun_G$ denote the moduli space of $G$-bundles on $X$, and
let $D(\Bun_G)$ denote the DG category of $D$-modules on $\Bun_G$.

Let $\check{G}$ denote the Langlands dual group to $G$, and let
$\LocSys_{\check{G}}$ denote the moduli space of $\check{G}$-bundles
on $X$ with connection.

\subsubsection{}

Let us begin by recalling some context from geometric Langlands.

Recall the geometric Langlands conjecture:
\begin{equation}\label{eq:gl}
D(\Bun_G) \simeq \IndCoh_{\on{Nilp}}(\LocSys_{\check{G}})
\end{equation}

\noindent which was given in this form by 
\cite{arinkin-gaitsgory}, following Beilinson-Drinfeld.

The right hand side has a subcategory $\QCoh(\LocSys_{\check{G}})$,
and the left hand side should have a parallel such subcategory.
Following \cite{arinkin-gaitsgory}, we refer to this putative 
subcategory of $D(\Bun_G)$ as the 
subcategory of \emph{tempered} $D$-modules on $\Bun_G$.

There are various (not obviously equivalent) proposals for the tempered subcategory.
One was given in \cite{arinkin-gaitsgory} \S 12, using
derived geometric Satake. It is dependent on a choice of point $x \in X(k)$;
we denote the resulting subcategory as $D(\Bun_G)^{x\temp}$. 
As in \cite{arinkin-gaitsgory}, a geometric Langlands equivalence
\eqref{eq:gl} that is equivalent with derived Satake at $x$ will necessarily
match $D(\Bun_G)^{x\temp}$ with $\QCoh(\LocSys_{\check{G}})$.

\subsubsection{}

We can now state our main theorem.

\begin{thm}\label{t:main}

The subcategory 
$D(\Bun_G)^{x\temp} \subset D(\Bun_G)$
is independent of the choice of point $x$.

\end{thm}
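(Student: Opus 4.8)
The strategy is to characterize the tempered subcategory $D(\Bun_G)^{x\temp}$ in a way that is manifestly point-independent, and then show this characterization agrees with the Arinkin--Gaitsgory definition at each point $x$. Recall that the definition at $x$ uses the spherical Hecke action: derived geometric Satake produces an action of $\QCoh(\LocSys_{\check G}^{\reg}_x)$ — equivalently, of the $\bE_3$-algebra $\on{Sph}_{G,x}$ of spherical $D$-modules at $x$ — on $D(\Bun_G)$, and $D(\Bun_G)^{x\temp}$ is defined (following \cite{arinkin-gaitsgory} \S12) as the subcategory on which a suitable ``antitempered'' idempotent acts by zero, or equivalently as the localization away from the augmentation ideal corresponding to the trivial $\check G$-local system's fiber. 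The key point is that temperedness at $x$ is controlled by how the monoidal category $\on{Sph}_{G,x}$ acts near the unit, and on $\QCoh(\LocSys_{\check G})$ the corresponding condition is support away from the locus where the restriction-to-$x$ map degenerates.

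First I would recall the $\bE_3$-monoidal category $\on{Sph}_{G,x} \simeq \IndCoh_{\mathrm{Nilp}}(\LocSys_{\check G}^{\reg}_{\hat x})$ and the resulting ``Hecke'' action on $D(\Bun_G)$, together with the decomposition of $\on{Sph}_{G,x}$ into a tempered part and its complement governed by the derived Satake idempotent. Second, I would show that the tempered projector $P_x \colon D(\Bun_G) \to D(\Bun_G)^{x\temp}$ can be computed purely in terms of the global geometric Langlands side: namely, that being $x$-tempered is equivalent to being supported, under the spectral action of $\QCoh(\LocSys_{\check G})$ which acts on $D(\Bun_G)$ \emph{independently of any point} (this global spectral action is available unconditionally), over the open substack where the $\check G$-local system has ``full'' monodromy in an appropriate sense — or more precisely, being annihilated by the same idempotent regardless of which $x$ one uses. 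Third, I would verify that for two points $x, y$, the Hecke actions at $x$ and at $y$ are compatible via the $\Ran$-space / factorization structure: the spherical categories at $x$ and $y$ both map to the factorization category over $\Ran X$, and the temperedness condition, being a condition about the unit component, is detected compatibly. This gives $D(\Bun_G)^{x\temp} = D(\Bun_G)^{y\temp}$.

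The main obstacle is the third step: bridging the local (point-dependent) Satake action with something global. The cleanest route is probably to use that the Vinberg/degeneration or the factorization structure lets one interpolate, but the technology Beraldo needed was precisely a full factorization-categorical comparison. To keep the proof \emph{unconditional and short}, I would instead argue more directly: show that the essential image of the non-tempered part is generated by objects of the form $\mathrm{Hecke}_{x}(\mathcal F) - (\text{unit part})$, exhibit such objects as ``Eisenstein-type'' or as killed by a universal localization that does not see $x$, and conclude by a density/generation argument that the complementary (tempered) subcategories coincide. Concretely, the heart of it is: the functor ``act by the augmentation ideal of $\on{Sph}_{G,x}$'' has the same kernel for all $x$, because that kernel is characterized by vanishing of all Hecke eigen-projections away from the point $\{\text{trivial local system's monodromy}\}$, a condition on the global $\LocSys_{\check G}$-action. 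I expect this comparison — phrased so as to avoid any unavailable factorization technology — to be where essentially all the work lies.
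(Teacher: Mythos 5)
Your high-level instinct is right — the paper's proof does hinge on leveraging the global spectral action of $\QCoh(\LocSys_{\check G})$, which is point-independent, to relate temperedness at different $x$. Your observation that $\Ker(\AT_x \star -)$ is a $\QCoh(\LocSys_{\check G})$-submodule of $D(\Bun_G)$ is also genuinely a step in the paper's argument (it uses that Hecke operators at $x$ commute with Satake functors both at other points and at $x$ itself, the latter via the pointwise $\bE_3$-structure from \cite{bezrukavnikov-finkelberg}). But the proposal has a genuine gap at precisely the place you flag as ``where essentially all the work lies'': your replacement for the factorization technology — characterizing the kernel of the augmentation action by ``vanishing of Hecke eigen-projections away from the trivial local system'' or by ``support over an open substack of $\LocSys_{\check G}$'' — is not correct as stated and not what is needed. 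Temperedness is not a support condition on $\LocSys_{\check G}$: under the geometric Langlands conjecture the tempered part corresponds to \emph{all} of $\QCoh(\LocSys_{\check G}) \subset \IndCoh_{\Nilp}(\LocSys_{\check G})$, so there is no open locus over which one can localize. The anti-tempered phenomena live in the ``extra'' formal/derived directions that $\IndCoh_{\Nilp}$ sees and $\QCoh$ does not, along the nilpotent singular support locus, not over a closed subset of the classical stack.

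The missing idea is a \emph{local constancy in $x$} argument, carried out via a ULA/lisse condition rather than via factorization Satake. Concretely: the anti-tempered unit at $x$ is the $!$-fiber of a quasi-ULA object $\AT \in \sH^{sph}_X$ living over the curve (this comes from $\Aut(\widehat{\mathcal D})$-equivariance of the derived Satake construction). One then uses the coaction $\coact\colon D(\Bun_G) \to D(\Bun_G)\otimes\QCoh(\LocSys_{\check G})$ obtained from self-duality of $\QCoh(\LocSys_{\check G})$, and shows that $(\Hecke_{\AT}\otimes\id)\circ\coact$ lands in $D(\Bun_G)\otimes\Lisse_{\LocSys_{\check G}}(X)$: a $\LocSys_{\check G}$-family of lisse $D$-modules on $X$, tensored with $D(\Bun_G)$. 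The conservativity of the $!$-fiber functor at a point of $X$ on such lisse families (over a connected curve) is what lets you pass from ``vanishes at $x$'' to ``vanishes identically,'' hence from ``$\AT_x \star \sG = 0$'' to ``$\Hecke_\AT(\sG) = 0$,'' and hence to ``$\AT_y \star \sG = 0$'' for all $y$. Your submodule observation is then exactly what's used to make the fiber at $x$ vanish, not merely when paired against a single $\sE \in \QCoh(\LocSys_{\check G})$ but as an object of $D(\Bun_G)\otimes\QCoh(\LocSys_{\check G})$. Without the quasi-ULA/lisse family mechanism and the conservativity lemma, the reduction you outline to ``a universal localization that does not see $x$'' does not close.
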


\noindent This result was proposed in \cite{arinkin-gaitsgory} Conjecture 12.7.5.

\subsection{Relation to work of Beraldo}

\subsubsection{}

A strategy of proof for Theorem \ref{t:main} was outlined by Dario Beraldo already
in 2015, yielding deeper results.
We describe the ingredients for his approach below.

\subsubsection{}

Roughly speaking, Beraldo's approach proceeds as follows. 

Beraldo has explained that 
a Ran space (or \emph{factorizable}) version of derived Satake would provide additional
symmetries of $D(\Bun_G)$, refining Gaitsgory's spectral action 
of $\QCoh(\LocSys_{\check{G}})$. Specifically, in \cite{dario-h},
has constructed a monoidal category $\bH(\LocSys_{\check{G}})$ receiving
a monoidal functor from $\QCoh(\LocSys_{\check{G}})$, and has conjectured
that the action of $\QCoh(\LocSys_{\check{G}})$ extends to 
$\bH(\LocSys_{\check{G}})$. He has observed that such an extension
would yield Theorem \ref{t:main}, and that such an extension should
follow from factorizable derived Satake 
(see \cite{dario-hochschild} \S 1.4.2 for related discussion, 
and \cite{dario-chiral} for a precise assertion in the Betti setting).

\subsubsection{}

Unfortunately, the factorizable derived Satake theorem has been 
slow to appear. It was claimed more than a decade ago by Gaitsgory-Lurie,
and again more recently by Justin Campbell and the second
author, where it is currently work in progress. In particular, at the time we 
are writing this, a definition of the spectral side has not yet appeared
publicly in written form. 
So the full derivation of the action of Beraldo's $\bH$ has remained 
somewhat heuristic. 

\subsubsection{}

Our purpose here is to provide a simple, unconditional proof of Theorem \ref{t:main}, 
sidestepping Beraldo's category $\bH$ and factorizable Satake. 

In particular, our argument does not resolve Beraldo's deep conjecture regarding
the action of $\bH$ on $D(\Bun_G)$. 
This remains an open problem, for which Beraldo's suggestion of
using factorizable Satake (once available) continues to appear to be the
most plausible strategy. Our work also does not settle 
other\footnote{See e.g. \cite{dario-oh-notes} for discussion 
of how an action of $\bH$ in the setting of \cite{agkrrv1} 
(and particularly \cite{agkrrv3}) would yield (arithmetic) Arthur parameters
for unramified automorphic representations.}
applications of Beraldo's conjecture.  

\subsection{Outline of the argument}

\subsubsection{}

The main ideas of our argument proceed as following. 

\subsubsection{}

For our point $x$, let $\sH_x^{sph}$ denote the associated (derived) spherical Hecke
category. There is a certain object $\AT_x \in \sH_x^{sph}$, which 
we call the \emph{anti-tempered unit} following \cite{dario-ramanujan}.

By definition, $D(\Bun_G)^{x\temp}$ is the kernel of the corresponding
Hecke functor:
\[
\AT_x \star -:D(\Bun_G) \to D(\Bun_G).
\]

\subsubsection{}

The point $x$ can be varied in the above description.

Specifically, there is a functor:
\[
\AT_X:D(\Bun_G) \to D(\Bun_G \times X)
\]

\noindent whose fiber at $x$ is the original functor
$\AT_x$, and similarly for any other point. 

\subsubsection{}

Roughly speaking, our idea is that (in a suitable sense)
the functor $D(\Bun_G) \to D(\Bun_G \times X)$ yields objects that
are locally constant along $X$, so the kernels of $\AT_x$ and $\AT_X$ coincide.

This is easier to explain in a slightly different context -- that
of sheaves with nilpotent singular support of \cite{agkrrv1}.
With notation as in \emph{loc. cit}., 
the corresponding Hecke functor:
\[
\AT_X:\Shv_{\Nilp}(\Bun_G) \to \Shv(\Bun_G \times X)
\]

\noindent maps into
$\Shv_{\Nilp}(\Bun_G) \otimes \mathsf{qLisse}(X)$ by universality of
the anti-tempered unit and 
the Nadler-Yun theorem
\cite{agkrrv1} Theorems 10.2.8 and 10.5.2 (which are  
following \cite{nadler-yun}). If e.g. we worked with complex curves, this would mean that
the functors $\AT_x$ and $\AT_y$ are the same up to choosing a path
between $x$ and $y$, and the Tannakian formalism applies 
in general.\footnote{In particular, this sketch provides a genuine
argument in the $\Shv_{\Nilp}$ setting, whether constructible
(as in \cite{agkrrv1}) or not (as in \cite{bz-n-betti-gl}, \cite{nadler-yun});
the Betti case may also be deduced directly from Beraldo's ideas 
via \cite{dario-chiral}. It should also be possible to adapt
\cite{dario-chiral} to the constructible \cite{agkrrv1} setting, but this
has not yet been done as far as we know.}

In the $D$-module setting, we use Gaitsgory's spectral action 
from \cite{generalized-vanishing} to essentially reduce to considering
Hecke eigensheaves, and then proceed from there. The reduction is in a similar spirit
to \cite{agkrrv1} \S 14.3-4. 

\begin{rem}

With that said, this note is logically independent
of \cite{agkrrv1}. Indeed, all of the ingredients in our argument were
already available when Arinkin-Gaitsgory formulated their conjecture.

\end{rem}

\subsection{Acknowledgements}

We thank Dima Arinkin, Dario Beraldo, and Dennis Gaitsgory for 
many productive conversations related to tempered $D$-modules.
The second author would also like to thank Dima Arinkin,
Dennis Gaitsgory, David Kazhdan, Nick Rozenblyum, and Yasha Varshavsky
for their collaboration on \cite{agkrrv1}, which was inspirational for
the present work. 

S.R. was supported by NSF grant DMS-2101984.

\section{Preliminary material}

Below, we collect some notation and basic constructions. 

We assume the reader is 
generally familiar with commonly used tools in de Rham geometric Langlands,
referring to \cite{dennis-laumonconf} for an introduction to 
these ideas. 

In what follows, $X$ is a geometrically connected,
smooth, projective curve over $k$. For $x\in X(k)$,
we let $i_x:\Spec(k) \to X$ denote the corresponding embedding.
We let $\Ran = \Ran_X$ denote the Ran space of $X$.

\subsection{Hecke functors}

We recall some preliminary constructions with Hecke functors parametrized
by points of $X$.

Below, we work over powers of the curve and Ran space.
For our point $x \in X(k)$, we let $\fL_x^+G$ (resp. $\fL G$, resp. $\Gr_{G,x}$)
denote the arc group (resp. loop group, resp. affine Grassmannian) 
based at this point. For a finite set $I$, 
let $\fL_{X^I}^+ G$ (resp. $\fL_{X^I} G$, resp. $\Gr_{G,X^I}$) denote the
standard corresponding space over $X^I$.

\subsubsection{}

For a finite set $I$, let $\sH_{X^I}^{sph} \coloneqq D(\Gr_{G,X^I})^{\fL_{X^I}^+G}$. 
Similarly, we let $\sH_{\Ran}^{sph}$ denote the Ran space version of the spherical
Hecke category, and $\sH_x^{sph}$ for the spherical category at a point $x$. 

We recall that $\sH_{\Ran}^{sph}$ is a monoidal DG category acting canonically on
$D(\Bun_G)$. We denote the product on $\sH_{\Ran}^{sph}$ and its
action on $D(\Bun_G)$ by $-\star-$.

\subsubsection{}

Let $\sF \in \sH_{X^I}^{sph}$ be given. 

On the one hand, $\sF$ defines an object of $\sH_{\Ran}^{sph}$, so 
a Hecke functor $\sF \star -:D(\Bun_G) \to D(\Bun_G)$.

There is also a closely related functor:
\[
\Hecke_{\sF}:D(\Bun_G) \to D(\Bun_G \times X^I) 
\]

\noindent constructed as follows. 
We have a standard
Hecke action functor:
\[
\sH_{X^I}^{sph} \otimes D(\Bun_G) \to D(\Bun_G).
\]

\noindent Considering the left hand side as a $(D(X^I),\overset{!}{\otimes})$-module
(via the action on the first functor), 
this action lifts uniquely:
\[
\xymatrix{
\sH_{X^I}^{sph} \otimes D(\Bun_G) \ar@{..>}[rr] \ar[drr] &&  
D(\Bun_G) \otimes D(X^I) \simeq D(\Bun_G \times X^I) \ar[d]^{\id \otimes C_{dR}^{\dot}(X^I,-)} \\
&& D(\Bun_G).
}
\]

\noindent of $D(X^I)$-module cateories. Finally, inserting $\sF$ on the first
tensor factor (in the dotted arrow above) 
gives the desired functor $\Hecke_{\sF}$.

We explicitly note that composing $\Hecke_{\sF}$ with de Rham cohomology
along $X^I$ gives $\sF \star -$.

\subsubsection{}

We remind the category $\Rep(\check{G})_{X^I}$ from \cite{cpsi} \S 6, and
the construction of the \emph{naive Satake functor}:
\[
\sS_{X^I}:\Rep(\check{G})_{X^I} \to \sH_{X^I}^{sph}.
\]

Similarly, we let:
\[
\sS_{\Ran}:\Rep(\check{G})_{\Ran} \to \sH_{\Ran}^{sph}
\]

\noindent denote the Ran space version, constructed out of the above functors.

\subsubsection{}

We will need the following technical notion in what follows. 

\begin{defin}

The subcategory $\sH_{X^I}^{sph,aULA} \subset \sH_{X^I}^{sph}$ 
of \emph{almost ULA} objects
the full (non-cocomplete) subcategory generated under finite colimits and direct summands
by applying $\sS_{X^I}$ to objects of $\Rep(\check{G})_{X^I}$ ULA over $X^I$.
The subcategory $\sH_{X^I}^{sph,qULA} \subset \sH_{X^I}^{sph}$ of
\emph{quasi-ULA} objects is the full subcategory generated under filtered colimits
by almost ULA objects. 

\end{defin}

\begin{rem}

We refer to \cite{cpsi} Appendix A and \S 6 for a convenient discussion of
ULA objects in this setting.

\end{rem}

\begin{rem}

Recall that e.g., the skyscraper sheaf $\delta_1 \in \sH_x^{sph}$ 
at the origin $1 \in \Gr_{G,x}$ is not compact; rather, it is \emph{almost compact}
in the technical sense. For similar reasons, 
the standard spherical sheaves over $X^I$ are not literally ULA over $X^I$;
we use the term \emph{almost ULA} in parallel with \emph{almost compact}.

\end{rem}

\subsection{Intermediate results}

We now formulate two intermediate results, 
from which we easily deduce Theorem \ref{t:main}.

\subsubsection{Local constancy}

Let $\sF \in \sH_X^{sph}$ be given. For $x \in X(k)$, let 
$\sF_x \in \sH_x^{sph}$ denote the $!$-fiber of $\sF$ at $x$.

We let: 
\[
\Hecke_{\sF}:D(\Bun_G) \to D(\Bun_G \times X)
\]

\noindent denote the following functor. 

By construction, the composition:
\[
D(\Bun_G) \xar{\Hecke_{\sF}} D(\Bun_G \times X)
\xar{(\id \times i_x)^!)}
D(\Bun_G)
\]

\noindent is the usual Hecke functor: 
\[
\sF_x \star -: D(\Bun_G) \to D(\Bun_G) 
\]

\noindent defined by $\sF_x$.

\subsubsection{}

With the above preliminary constructions out of the way, we can state:

\begin{thm}\label{t:lisse-ker}

Suppose $\sF \in \sH_X^{sph}$ is quasi-ULA. 
Then $\Ker(\Hecke_{\sF}) = \Ker(\sF_x \star -)$.

\end{thm}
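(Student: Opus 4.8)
The plan is to show that an object $M \in D(\Bun_G)$ is killed by $\sF_x \star -$ \emph{for one} $x$ if and only if $\Hecke_{\sF}(M) = 0$, and the heart of the matter is upgrading ``killed at a single point'' to ``killed identically over $X$.'' The inclusion $\Ker(\Hecke_{\sF}) \subseteq \Ker(\sF_x\star -)$ is formal: by construction $(\id\times i_x)^! \circ \Hecke_{\sF} = \sF_x \star -$, so if $\Hecke_{\sF}(M) = 0$ then $\sF_x\star M = 0$ as well, for every $x$. The substantive direction is the converse. First I would reduce, using Gaitsgory's spectral action of $\QCoh(\LocSys_{\check G})$ on $D(\Bun_G)$ from \cite{generalized-vanishing}, to the case where $M$ is supported over a quasi-compact open substack of $\LocSys_{\check G}$, and then further --- in the spirit of \cite{agkrrv1} \S 14.3--4 --- to the case of (generalized) Hecke eigensheaves, i.e. objects on which the $\Rep(\check G)_{\Ran}$-action factors through evaluation at a fixed $\sigma \in \LocSys_{\check G}$. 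This reduction is legitimate because the formation of $\Hecke_{\sF}(M)$ and of $\sF_x\star M$ are both compatible with the spectral action (the spherical Hecke action over $X^I$ commutes with the global spectral action), so the kernel decomposes accordingly over $\LocSys_{\check G}$.

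Next, on the eigensheaf locus the object $\Hecke_{\sF}(M) \in D(\Bun_G \times X)$ becomes computable in terms of the naive Satake functor evaluated at $\sigma$: since $\sF$ is quasi-ULA, it is a filtered colimit of finite colimits and summands of $\sS_X(V)$ for $V \in \Rep(\check G)_X$ ULA over $X$, and the Hecke action of $\sS_X(V)$ on a $\sigma$-eigensheaf produces, along the $X$-direction, exactly the restriction to $X$ of the $\check G$-local system $\sigma$ applied to $V$ --- in other words a \emph{lisse} (in fact $\qLisse$) sheaf on $X$ tensored with the eigensheaf. Concretely, $\Hecke_{\sF}(M)$ lands in $D(\Bun_G)\otimes \qLisse(X)$ (or its ULA-generated variant), with the $X$-factor governed by $\sigma|_X$. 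This is the $D$-module analogue of the Nadler--Yun input used in the $\Shv_\Nilp$ sketch in the introduction, but here it is a direct consequence of the eigensheaf property plus the ULA hypothesis on $\sF$, with no appeal to \cite{agkrrv1}.

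Once $\Hecke_{\sF}(M)$ is known to be (relatively) lisse along $X$, the conclusion follows: a lisse sheaf on the geometrically connected curve $X$ (or a relatively lisse object of $D(\Bun_G\times X)$) whose $!$-fiber at the single $k$-point $x$ vanishes must itself vanish --- this is the Tannakian/monodromy argument alluded to in the introduction, using connectedness of $X$ and, if $x$ is not rational, a standard descent along a finite extension of $k$. Hence $\sF_x \star M = 0$ forces $\Hecke_{\sF}(M) = 0$ on each eigensheaf piece, and assembling back over $\LocSys_{\check G}$ via the spectral action gives $\Ker(\sF_x\star -) \subseteq \Ker(\Hecke_{\sF})$, completing the proof. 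The main obstacle I anticipate is the second step: making precise that $\Hecke_{\sF}(M)$ is relatively lisse along $X$ on the eigensheaf locus, i.e. controlling the interaction of the ULA property of $\sF$ over $X$ with the eigensheaf condition so that the ``flat section'' structure of $\sigma|_X$ genuinely propagates through the Hecke functor; this requires knowing that the spectral action on a neighborhood of $\sigma$ (not just at the closed point) behaves well, which is exactly where one invokes the relevant properties of Gaitsgory's spectral action and the local-to-global compatibility of naive Satake.
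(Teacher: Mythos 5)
Your intuition is aligned with the paper's strategy, and you correctly identify the two key ingredients: the spectral action, and the fact that quasi-ULA objects produce lisse behavior along $X$. The formal inclusion $\Ker(\Hecke_{\sF}) \subset \Ker(\sF_x \star -)$ is as you say. But the step of literally \emph{reducing to Hecke eigensheaves} and then \emph{assembling back over $\LocSys_{\check{G}}$} is a real gap, and you do not explain how to close it. An arbitrary object of $D(\Bun_G)$ is not built from eigensheaves (objects with spectral support at a single $\sigma$); $\LocSys_{\check{G}}$ is derived and stacky, and the $\QCoh(\LocSys_{\check{G}})$-action does not yield a pointwise decomposition. Moreover, your appeal to the reduction requires knowing that $\Ker(\sF_x \star -)$ is a $\QCoh(\LocSys_{\check{G}})$-submodule category, which you gesture at but which itself needs an argument (the paper treats Hecke modifications away from $x$ and at $x$ separately, using for the latter the pointwise monoidal structure from derived Satake).

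The paper closes exactly this gap by replacing the eigensheaf reduction with the coaction functor $\coact : D(\Bun_G) \to D(\Bun_G) \otimes \QCoh(\LocSys_{\check{G}})$, dual to the spectral action. Applying $\Hecke_{\sF} \otimes \id$ to $\coact(\sG)$, one proves (using the ULA hypothesis and the construction of $\Loc$ via the tautological bundles $\sE_V$) that the result lands in $D(\Bun_G) \otimes \Lisse_{\LocSys_{\check{G}}}(X)$, a category of $\LocSys_{\check{G}}$-\emph{families} of lisse $D$-modules on $X$. One then proves as a separate proposition that the fiber functor $i_x^!$ is conservative on $\sC \otimes \Lisse_{\sY}(X)$ for $\sY$ eventually coconnective and lafp, by reducing to field-valued points of an atlas. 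Finally one shows that $\sG \in \Ker(\sF_x \star -)$ implies the $i_x^!$-fiber of $(\Hecke_{\sF}\otimes\id)\coact(\sG)$ vanishes, via the submodule claim above together with perfect self-duality of $\QCoh$ on QCA stacks. So the $\coact$-based argument is precisely the rigorous version of your ``decompose over $\LocSys_{\check{G}}$, argue pointwise by monodromy, reassemble'' plan; your proposal identifies the right heuristic but does not supply the mechanism, and the obstacle you flag at the end (controlling lisse-ness on a neighborhood of $\sigma$, not just at the point) is exactly what the family formulation resolves.
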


This is the main technical result of the present paper; its proof 
is given in \S \ref{s:pf}.

\subsubsection{Projectors}

We follow terminology from \cite{dario-ramanujan}.

Define the \emph{tempered unit (at $x$)} 
$\e_x^{\tau} \in \sH_x^{sph}$ as follows. We recall the
\emph{derived Satake theorem} of \cite{bezrukavnikov-finkelberg}, which asserts:
\[
D(\Gr_{G,x})^{\fL_x^+G} \simeq \IndCoh_{\on{Nilp}}((\bB \check{G})^{\bS^2}) 
\subset \IndCoh((\bB \check{G})^{\bS^2}).
\]

\noindent There are adjoint functors:
\[
\Xi:\QCoh(\bB \check{G})^{\bS^2}) \rightleftarrows 
\IndCoh((\bB \check{G})^{\bS^2}):\Psi.
\]

\noindent Moreover, the unit object in $\sH_x^{sph}$ corresponds
to the trivial representation $\mathsf{triv} \in \Rep(\check{G})^{\heart} = 
\IndCoh_{\on{Nilp}}((\bB \check{G})^{\bS^2})^{\heart}$. 
We then take $\e_x^{\tau}$ to correspond to $\Xi \Psi(\mathsf{triv})$.

\subsubsection{}

By definition, there is a canonical map:
\[
\e_x^{\tau} \to \delta_1 \in \sH_x^{sph}.
\]

\noindent We then define the \emph{anti-tempered unit (at $x$)} as:
\[
\AT_x \coloneqq \Ker(\e_x^{\tau} \to \delta_1).
\]

By definition, an object $\sG \in D(\Bun_G)$ lies in 
$D(\Bun_G)^{x\temp}$ if and only if $\AT_x \star \sG = 0$.

\subsubsection{}

We now have the following basic observation.

\begin{lem}\label{l:at-qula}

There is a canonical object $\AT \in \sH_X^{sph,qULA}$ (not depending
on the choice of point $x \in X(k)$) with $!$-fiber
$\AT_x \in \sH_x^{sph}$ at $x$.

\end{lem}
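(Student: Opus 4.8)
The plan is to construct $\AT$ by spreading out the defining diagram $\e_x^\tau \to \delta_1$ over the curve, working uniformly in $X$ rather than pointwise. First I would recall that the naive Satake functor has a family version $\sS_X : \Rep(\check G)_X \to \sH_X^{sph}$, and that $\delta_1 \in \sH_x^{sph}$ is the fiber of the family unit $\e_X \coloneqq \sS_X(\mathsf{triv}_X) \in \sH_X^{sph}$, which is almost ULA by the very definition of $\sH_X^{sph,aULA}$ (it is $\sS_X$ applied to the trivial representation, which is ULA over $X$). So the half of the picture involving $\delta_1$ globalizes for free.

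The substance is to globalize $\e_x^\tau = \Xi\Psi(\mathsf{triv})$ and the map $\e_x^\tau \to \delta_1$. Here I would invoke the factorizable/family form of the derived Satake description that is implicit in the setup: the construction $\Xi \Psi$ of the tempered projector is functorial and, crucially, is built from the monad $\Psi \Xi$ on $\QCoh((\bB\check G)^{\bS^2})$, which admits a bar-resolution expression $\e_x^\tau = \colim_n (\Psi\Xi)^{\bullet}(\mathsf{triv})$ in terms of iterated tensor products. Each term in this resolution is obtained by applying $\sS_X$ (in the family version, over $X$) to a ULA object of $\Rep(\check G)_X$ — concretely, to tensor powers of the regular representation placed at $X$ — so each term of the resolution lies in $\sH_X^{sph,aULA}$, and the geometric realization of the simplicial object lies in $\sH_X^{sph,qULA}$ since that subcategory is closed under filtered (in particular, sifted) colimits. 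This produces $\AT \coloneqq \Ker(\e_X^\tau \to \e_X) \in \sH_X^{sph}$; one checks the kernel again lands in $\sH_X^{sph,qULA}$, either by rewriting it as the shifted cone and noting the cone of a map of quasi-ULA objects is quasi-ULA (finite colimits preserve almost ULA, filtered colimits preserve quasi-ULA), or by exhibiting $\AT$ directly as a colimit of almost-ULA objects.

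Then I would verify the fiber statement: $!$-restriction along $i_x : \Spec k \to X$ is a monoidal functor $\sH_X^{sph} \to \sH_x^{sph}$ that is compatible with $\sS_X \to \sS_x$, hence commutes with the colimits defining $\e_X^\tau$ (base change for the family Satake functor, plus the fact that $!$-pullback is continuous), so $(i_x)^! \e_X^\tau = \e_x^\tau$ and $(i_x)^! \e_X = \delta_1$ compatibly with the structure maps; taking kernels (which $(i_x)^!$ preserves, being exact) gives $(i_x)^! \AT = \AT_x$. Finally, independence of $x$ is automatic since $\AT$ is manufactured from $\sS_X$ and the universal tempered projector without reference to any point. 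The main obstacle I anticipate is the first part of the second paragraph: making precise that the tempered projector $\Xi\Psi(\mathsf{triv})$, which is \emph{a priori} defined at a point via \cite{bezrukavnikov-finkelberg}, genuinely arises as the fiber of an object of $\sH_X^{sph,qULA}$ built from the \emph{naive} (family) Satake functor. One must express the tempered projector through a resolution by naive-Satake images of ULA representations and control that this resolution spreads out over $X$ — essentially a ULA/continuity bookkeeping exercise, but one that requires care about which objects of $\Rep(\check G)_X$ are ULA and about the continuity of the relevant functors.
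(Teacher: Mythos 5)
The central step of your proposal --- expressing $\e_x^\tau$ as a bar resolution of the monad $\Psi\Xi$ on $\QCoh((\bB\check G)^{\bS^2})$ --- does not work as stated. Since $\Xi$ is a \emph{fully faithful} left adjoint of $\Psi$, the unit of the adjunction is an isomorphism, i.e.\ $\Psi\Xi \simeq \id_{\QCoh}$. The bar construction for this monad is therefore the constant simplicial object, and its colimit is $\mathsf{triv}$, not $\e_x^\tau = \Xi\Psi(\mathsf{triv})$. (The dual observation applies to the comonad $\Xi\Psi$ on $\IndCoh$: it is idempotent, so its iterates stabilize immediately and yield no resolution by simpler objects.) So the claimed description of $\e_x^\tau$ as a colimit of naive Satake images of tensor powers of $\cO_{\check G}$ is not a consequence of what you invoke; it would require a genuinely separate argument, e.g.\ an explicit Koszul- or Cech-type resolution adapted to $(\bB\check G)^{\bS^2}$, and you have not supplied one. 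A secondary issue, if such a resolution were available: the terms involving $\cO_{\check G}^{\otimes n}$ are only quasi-ULA (as $\cO_{\check G}$ is infinite-dimensional, so $\sS_X$ of it is a filtered colimit, not a finite one), so the geometric realization is a sifted colimit of quasi-ULA objects, and passing from there to ``quasi-ULA'' (defined via \emph{filtered} colimits of \emph{almost} ULA objects) requires an interleaving argument that you gloss over as bookkeeping.

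The paper sidesteps the need for any resolution by a different mechanism: $\Aut$-equivariance. One observes that the endofunctor $\Xi\Psi$ of the abstract spherical category $\sH^{sph}$ (for an abstract formal disc $\widehat{\sD}$) is canonically equivariant for the group $\Aut$ of automorphisms of $\widehat{\sD}$, by unwinding the Bezrukavnikov--Finkelberg form of derived Satake; therefore $\AT = \Ker(\e^\tau \to \delta_1)$ is an $\Aut$-equivariant object, and the standard spreading-out construction produces a canonical object of $\sH_X^{sph}$ over the curve. Quasi-ULAness is then checked on generators, since $(\sH^{sph})^{\Aut}$ is generated by its heart $\Rep(\check G)^{\heart}$, whose image consists of the standard almost ULA objects. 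The paper also notes (in a remark following the lemma) a second route that avoids equivariance subtleties entirely: by Beraldo's theorem one may replace $\AT_x$ by the spherical Whittaker object $\sW\sS_0$, whose definition by $*$-averaging from $\Whit_x^{sph}$ visibly spreads out over $X$ to an almost ULA object. Your high-level strategy --- build $\AT$ directly over $X$ and check the fiber --- is the right one, but the missing ingredient is a correct construction of the spread-out $\e_X^\tau$; the bar resolution of $\Psi\Xi$ cannot supply it.
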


\begin{proof}

This essentially follows from the universality of the construction of $\AT_x$.
We include more details below.

Let $\widehat{\sD}$ be \emph{some} formal disc.
Let $\Aut$ denote the group indscheme of its automorphisms. Let
$\Aut^{\star} \subset \Aut$ denote the group subscheme of automorphisms
fixing the closed point of $\widehat{\sD}$; we remind that
$\Aut^{\star} \to \Aut$ is an isomorphism modulo nilpotent ideals.
The group $\Aut$ acts strongly on
$\sH_x^{sph}$.  

By a standard construction, any $\Aut$-equivariant object $\sF_0$ of $\sH^{sph}$ 
(the spherical category corresponding to $\widehat{\sD}$)
gives rise to an object $\sF \in \sH_X^{sph}$. We claim any
resulting such objects are quasi-ULA; indeed, $(\sH^{sph})^{\Aut}$ 
is generated under colimits by objects in the heart of its $t$-structure,
and the heart of its $t$-structure is exactly $\Rep(\check{G})^{\heart}$,
and these objects map to the standard (almost ULA) objects of
$\sH_X^{sph}$ (cf. \cite{central} Proposition 1).

Next, we observe that we have a projection $\pi:\Aut^{\star} \to \bG_m$ 
with pro-unipotent kernel. Moreover, every object of
$\sH^{sph}$ is automatically equivariant with respect
to the kernel $\pi$; indeed, by pro-unipotence, this can be checked
on generators, and then it follows in the previous paragraph.
Moreover, this same logic shows every object is $\Aut$-monodromic,
or equivalently (after a choice of coordinate), 
$\bG_m$-monodromic for $\bG_m$ acting by loop rotation.

We now observe that $\sH^{sph}$ carries a canonical endofunctor
corresponding to $\Xi\Psi$ on the spectral side. One readily checks
that $\Xi\Psi$ is (canonically) strongly $\Aut$-equivariant by using
\cite{bezrukavnikov-finkelberg},
noting that their form of derived Satake describes the loop equivariant category,
so can be understood to be $\Aut$-equivariant in a suitable sense by the above. 
This concludes the argument.

\end{proof}

We now observe that Theorem \ref{t:main} follows immediately from 
Lemma \ref{l:at-qula} and Theorem \ref{t:lisse-ker}.

\begin{rem}

To avoid the subtleties involved in the above argument,
one could also proceed as follows. First, 
by \cite{dario-ramanujan} Theorem 1.4.8, $\Ker(\AT_x\star -) = \Ker(\sW\sS_0 \star -)$
for $\sW\sS_0$ as in \emph{loc. cit}., i.e., one takes the 
unit spherical Whittaker sheaf in $\Whit_x^{sph} \coloneqq 
D(\Gr_{G,x})^{\fL_x N,\psi}$ and $*$-averages it to $D(\Gr_G)^{\fL_x^+G}$.
This description of $\sW\sS_0$ manifestly extends to define a quasi-ULA 
(even almost ULA) object
$\sW\sS_{0,X} \in \sH_X^{sph}$, to which we could then apply
Theorem \ref{t:lisse-ker}.

\end{rem}

\subsection{Gaitsgory's spectral action}

We now review the main results of \cite{generalized-vanishing};
see also \cite{dennis-laumonconf} \S 4.3-4.5 and \S 11.1.

First, there is a canonical symmetric monoidal functor:
\[
\Loc:\Rep(\check{G})_{\Ran} \to \QCoh(\LocSys_{\check{G}})
\]

\noindent from \emph{loc. cit}. It admits a fully faithful continuous right
adjoint (cf. \emph{loc. cit}.); therefore, the restriction functor:
\[
\QCoh(\LocSys_{\check{G}})\mod \to \Rep(\check{G})_{\Ran}\mod
\]

\noindent is fully faithful. (Here modules are taken in the
symmetric monoidal category $\DGCat_{cont}$ of cocomplete DG categories).

On the other hand, there is an action of $\Rep(\check{G})_{\Ran}$ on $D(\Bun_G)$
that is constructed as:
\[
\Rep(\check{G})_{\Ran} \xar{\sS_{\Ran}} \sH_{\Ran}^{sph} \actson D(\Bun_G).
\]

\begin{thm}[Gaitsgory, \cite{generalized-vanishing}, 
\cite{dennis-laumonconf} Theorem 4.5.2] 
\label{t:vanishing}

The above action of $\Rep(\check{G})_{\Ran}$ on $D(\Bun_G)$ factors through
a (necessarily unique) action of $\QCoh(\LocSys_{\check{G}})$ via the 
localization functors.

\end{thm}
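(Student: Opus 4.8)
The plan is to isolate a formal reduction from the one genuinely geometric input, and to attribute the latter to \cite{generalized-vanishing}.

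As recalled above, $\Loc$ is symmetric monoidal and admits a fully faithful continuous right adjoint $\Loc^{R}$. Consequently $\QCoh(\LocSys_{\check{G}})$ is identified, as a symmetric monoidal category, with the category of modules in $\Rep(\check{G})_{\Ran}$ over the commutative idempotent algebra $\mathbf{A} \coloneqq \Loc^{R}\Loc(\mathbf{1}) \simeq \Loc^{R}(\cO_{\LocSys_{\check{G}}})$, where $\mathbf{1}$ is the unit object; write $u \colon \mathbf{1} \to \mathbf{A}$ for the idempotent unit map and $\mathbf{I} \coloneqq \on{fib}(u)$. Since $\Loc^{R}$ is continuous, $\QCoh(\LocSys_{\check{G}}) \simeq \mathbf{A}\mod$ and the corresponding localization is smashing; hence a $\Rep(\check{G})_{\Ran}$-module category $\cM$ lies in the essential image of the (already known to be fully faithful) restriction $\QCoh(\LocSys_{\check{G}})\mod \to \Rep(\check{G})_{\Ran}\mod$ precisely when $u$ acts on $\cM$ by an equivalence, equivalently when the endofunctor $\mathbf{I}\star-$ of $\cM$ vanishes. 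Uniqueness of the lift is then automatic from the full faithfulness just invoked. So the plan reduces Theorem \ref{t:vanishing} to the single assertion that $\mathbf{I}\star-$ vanishes on $D(\Bun_G)$.

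For this vanishing I would exploit the interaction of geometric Satake with factorization. The object $\mathbf{I}$ measures the discrepancy between the ``naive'' spreading of $\Rep(\check{G})$ over the Ran space and the de Rham mapping stack $\LocSys_{\check{G}} = \Maps(X_{\mathrm{dR}}, \bB\check{G})$: it is built from the failure of $\mathbf{1}$ to be $\Loc$-local and is, in a precise sense, supported along collisions of infinitely-nearby points. Through the naive Satake functor $\sS_{\Ran}$, the action of $\mathbf{I}$ on $D(\Bun_G)$ factors through iterated spherical Hecke correspondences over $\Gr_{G,\Ran}$, and is thereby controlled by the uniformization of $\Bun_G$; combining this with contractibility of the Ran space and with the Whittaker/Poincar\'e normalization of the spherical action used in \cite{generalized-vanishing}, one concludes that $\mathbf{I}$ annihilates $D(\Bun_G)$.

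The main obstacle is precisely this last step: the vanishing of $\mathbf{I}\star-$ on $D(\Bun_G)$ is the content of Gaitsgory's vanishing theorem, and it is there --- rather than in the soft localization formalism, the uniqueness, or the passage between module categories and sheaves of categories --- that all of the real work lies. We therefore rest content with the reduction above and refer to \cite{generalized-vanishing} and \cite{dennis-laumonconf} for the proof of the vanishing itself.
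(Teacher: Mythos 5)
The paper does not prove Theorem~\ref{t:vanishing} at all: it is quoted as an external input, with the citations to \cite{generalized-vanishing} and \cite{dennis-laumonconf} serving as the ``proof.'' Your proposal is therefore not in conflict with anything in the paper, and your final deferral to those same references is exactly the right move. What you add beyond the paper is a clean formal reduction: since $\Loc$ is a symmetric monoidal functor with a fully faithful continuous right adjoint $\Loc^R$, the target is indeed equivalent (as a $\Rep(\check G)_{\Ran}$-module, in fact symmetric monoidally) to modules over the idempotent commutative algebra $\mathbf A = \Loc^R(\cO_{\LocSys_{\check G}})$, the localization is smashing, and factoring a module category through it is equivalent to the vanishing of $\mathbf I\star-$ where $\mathbf I = \on{fib}(\mathbf 1\to\mathbf A)$; uniqueness is automatic from full faithfulness, which is what the paper's parenthetical ``(necessarily unique)'' is also leaning on. This is correct and is a useful way to package what the theorem actually asserts. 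The one place where your writeup is soft is the middle paragraph: the appeal to ``contractibility of the Ran space,'' ``uniformization,'' and ``Whittaker/Poincar\'e normalization'' is a heuristic gloss on Gaitsgory's argument rather than a proof sketch that could be checked, and it would be better to state plainly (as you do in your final paragraph) that this vanishing \emph{is} the theorem of \cite{generalized-vanishing} and not attempt to summarize its mechanism in a sentence. As it stands, no gap: you have correctly separated the soft categorical bookkeeping from the genuinely geometric input and attributed the latter where it belongs.
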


\begin{rem}

Related results in other contexts have also recently been obtained:
see \cite{nadler-yun}, \cite{agkrrv1}, \cite{fargues-scholze}. In these other
contexts, the proofs are more conceptual. 

\end{rem}

We again use $-\star-$ to denote the action of $\QCoh(\LocSys_{\check{G}})$
on $D(\Bun_G)$.

\section{Proof of Theorem \ref{t:lisse-ker}}\label{s:pf}

As above, the proof of Theorem \ref{t:main} reduces to Theorem \ref{t:lisse-ker}.
The purpose of this section is to prove the latter result.

\subsection{Setup}

It is clear that $\Ker(\Hecke_{\sF}) \subset \Ker(\sF_x \star -)$. 
So it remains to show the converse. We therefore fix 
$\sG \in \Ker(\sF_x \star -) \subset D(\Bun_G)$ and aim 
to show that $\sG \in \Ker(\Hecke_{\sF})$.

\subsubsection{}

We have an action functor:
\[
\act:\QCoh(\LocSys_{\check{G}}) \otimes D(\Bun_G) \to D(\Bun_G).
\]

\noindent As the first factor is canonically self-dual, we obtain
a functor:
\[
\coact:D(\Bun_G) \to \QCoh(\LocSys_{\check{G}}) \otimes D(\Bun_G).
\]

\subsubsection{}\label{sss:strategy}

We now form the following commutative diagram, whose analysis 
is central to the argument.
\[
\begin{tikzcd}
D(\Bun_G)
\arrow[rr,"\coact"]
\arrow[ddrr, bend right = 5,"\Hecke_{\sF}"' near end] 
\arrow[ddrrrr, bend right = 40, "\sF_x \star -"']
& & 
D(\Bun_G) \otimes \QCoh(\LocSys_{\check{G}})
\arrow[d,"\Hecke_{\sF} \otimes \id"] 
\\ & & 
D(\Bun_G \times X) \otimes \QCoh(\LocSys_{\check{G}}) 
\arrow[rr,"(\id \times i_x)^! \otimes \id"] 
\arrow[d,"{\id \otimes \Gamma(\LocSys_{\check{G}},-)}"]
& &
D(\Bun_G) \otimes \QCoh(\LocSys_{\check{G}}) 
\arrow[d,"{\id \otimes \Gamma(\LocSys_{\check{G}},-)}"] 
\\ & & 
D(\Bun_G \times X) 
\arrow[rr,"(\id \times i_x)^!"] 
& & 
D(\Bun_G)
\end{tikzcd}
\]

We consider $\sG$ as an object of the top left term. By assumption, 
it is mapped to $0$ in the bottom right term. Our goal is to show that
it maps to zero in the bottom left term. We will do so by showing the
following:

\begin{itemize}

\item (\S \ref{ss:st-1}) $\sG$ maps to zero in the top term of the rightmost column of the diagram, i.e.:
\begin{equation}\label{eq:st-1}
((\id \times i_x^!) \otimes \id)(\Hecke_{\sF}\otimes \id)\coact(\sG) = 0.
\end{equation}

\item (\S \ref{ss:st-2}) $\sG$ maps to zero in the middle term of the second column of the diagram, i.e.: 
\begin{equation}\label{eq:st-2}
(\Hecke_{\sF}\otimes \id)\coact(\sG) = 0.
\end{equation}

\end{itemize} 

\noindent Clearly the latter claim suffices.

\subsection{Step 1}\label{ss:st-1}

We begin by establishing \eqref{eq:st-1}.

\subsubsection{Reduction}

We have the following standard observation.

\begin{lem}\label{l:lisse-cons}

Suppose that $\sY$ is a QCA algebraic stack in the sense of \cite{dg-finiteness} and
suppose that $\sC$ is a DG category. Then an object:
\[
\sF \in \sC \otimes \QCoh(\sY)
\]

\noindent is zero if and only if for every $\sE \in \QCoh(\sY)$, we 
have:
\[
\big(\id \otimes \Gamma(\sY,-)\big)(\sF \otimes \sE) = 0 \in \sC.
\]

\noindent Here we consider $\sC \otimes \QCoh(\sY)$ as a module
category for $\QCoh(\sY)$ in the evident way, writing the action the right.

\end{lem}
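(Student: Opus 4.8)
The plan is to prove Lemma \ref{l:lisse-cons} by a standard self-duality/generation argument, using that $\QCoh(\sY)$ is rigid (dualizable with dualizable unit) for $\sY$ QCA. First I would recall that for a QCA stack $\sY$, the category $\QCoh(\sY)$ is compactly generated (or at least dualizable) and, crucially, \emph{rigid} as a symmetric monoidal category: it is self-dual as a plain DG category via the pairing $\Gamma(\sY, - \otimes -)$, with the coevaluation given by the ``diagonal'' object, i.e. the image of $\cO_\sY$ under $\QCoh(\sY) \to \QCoh(\sY) \otimes \QCoh(\sY) \simeq \QCoh(\sY \times \sY)$ along the pushforward by the diagonal (this uses that the diagonal of $\sY$ is affine, or at least quasi-affine, which holds for QCA stacks — this is where \cite{dg-finiteness} enters). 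The ``only if'' direction is trivial. For the ``if'' direction, the idea is: the condition says precisely that the partial pairing of $\sF$ against every $\sE$ is zero, and self-duality lets us recover $\sF$ from all such pairings.

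The key steps, in order: (1) Identify $\QCoh(\sY)$ as a self-dual object of $\DGCat_{cont}$, with evaluation $\QCoh(\sY) \otimes \QCoh(\sY) \to \Vect$ given by $\sE_1 \otimes \sE_2 \mapsto \Gamma(\sY, \sE_1 \otimes \sE_2)$ and coevaluation $\Vect \to \QCoh(\sY) \otimes \QCoh(\sY)$ determined by the pushforward of $\cO_\sY$ along the diagonal. (2) Tensoring with $\sC$, deduce that the composite
\[
\sC \otimes \QCoh(\sY) \xrightarrow{\id \otimes \mathrm{coev} \otimes \id} \sC \otimes \QCoh(\sY) \otimes \QCoh(\sY) \otimes \QCoh(\sY) \xrightarrow{\id \otimes \mathrm{ev} \otimes \id} \sC \otimes \QCoh(\sY)
\]
is the identity — this is just the triangle/zigzag identity for a dualizable object, tensored with $\id_\sC$. (3) Unwind this composite applied to $\sF$: the coevaluation inserts the diagonal, and the evaluation against it produces exactly an expression of the form $\bigl(\id \otimes \Gamma(\sY, -)\bigr)(\sF \otimes \sE)$ integrated/colimited over the pieces of the diagonal object. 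Concretely, writing the diagonal object as a colimit of objects $\sE_\alpha \boxtimes \sE'_\alpha$ (possible since $\QCoh(\sY) \otimes \QCoh(\sY) \simeq \QCoh(\sY \times \sY)$ and $\QCoh(\sY)$ is compactly generated, or simply arguing with the identity functor as a colimit of corepresentables), the identity-map statement becomes
\[
\sF \;\simeq\; \colim_\alpha\, \bigl(\id \otimes \Gamma(\sY, - \otimes \sE'_\alpha)\bigr)(\sF) \otimes \sE_\alpha,
\]
and if each $\bigl(\id \otimes \Gamma(\sY, -)\bigr)(\sF \otimes \sE'_\alpha) = 0$ then every term vanishes, so $\sF = 0$.

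The main obstacle — really the only subtle point — is Step (1): verifying that $\QCoh(\sY)$ is genuinely self-dual with the claimed evaluation/coevaluation for a QCA stack, i.e. that the coevaluation functor $\Vect \to \QCoh(\sY \times \sY)$, $k \mapsto \Delta_* \cO_\sY$, is well-defined and continuous and satisfies the zigzag identities against $\Gamma(\sY, - \otimes -)$. This is where one must invoke the finiteness hypotheses of \cite{dg-finiteness}: for a QCA stack the diagonal is quasi-affine (hence $\Delta_*$ is continuous and $\QCoh$-linear) and $\Gamma(\sY, -)$ is continuous, and these are exactly the inputs that make $\QCoh(\sY)$ dualizable and self-dual. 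I would handle this by citing the relevant statements from \cite{dg-finiteness} (dualizability and self-duality of $\QCoh$ of a QCA stack) rather than reproving them, since the lemma is flagged as ``standard''. Everything after that is formal manipulation of dualizable objects in a symmetric monoidal $\infty$-category, tensored with the auxiliary category $\sC$.
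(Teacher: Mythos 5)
Your proposal takes essentially the same approach as the paper: both hinge on the perfect self-duality of $\QCoh(\sY)$ for QCA stacks via the pairing $\Gamma(\sY, - \otimes -)$, citing \cite{dg-finiteness}. The paper's final formal step is a bit slicker — for dualizable $\sD$, an object $\sF \in \sC \otimes \sD$ is the same as a continuous functor $\sD^\vee \to \sC$ (namely $\lambda \mapsto (\id \otimes \lambda)(\sF)$), which is zero iff it kills every object — thereby sidestepping your zigzag-plus-colimit-of-box-tensors unwinding, whose implicit claim that $\Delta_*\cO_\sY$ can be written as a (non-iterated) colimit of pure box-tensors is true but would need a word of justification.
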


\begin{proof}

More generally, for a dualizable DG category $\sD$, and object:
\[
\sF \in \sC \otimes \sD
\]

\noindent is zero if and only if $(\id \otimes \lambda)(\sF) = 0 \in \sC$
for every $\lambda \in \sD^{\vee}$, as a functor
$\Vect \xar{\sF} \sC \otimes \sD$ is equivalent by duality to a functor
$\sD^{\vee} \to \sC$. Now the claim follows from the existence of
perfect self-duality for QCA stacks, cf. \cite{dg-finiteness} \S 4.3.7.

\end{proof}

Therefore, it suffices to show that for any $\sE \in \QCoh(\LocSys_{\check{G}})$,
we have:
\begin{equation}\label{eq:w/e}
(\id \otimes \Gamma(\LocSys_{\check{G}},-))
\Bigg(\Big(((\id \times i_x^!) \otimes \id)(\Hecke_{\sF}\otimes \id)\coact(\sG) \Big) \otimes
\sE\Bigg) = 0.
\end{equation}

\subsubsection{}

We now manipulate the left hand side of \eqref{eq:w/e}.

We have:
\[
\begin{gathered}
(\id \otimes \Gamma(\LocSys_{\check{G}},-))
\Bigg(\Big(((\id \times i_x^!) \otimes \id)(\Hecke_{\sF}\otimes \id)\coact(\sG) \Big) \otimes
\sE\Bigg) = \\
(\id \otimes \Gamma(\LocSys_{\check{G}},-))
((\id \times i_x^!) \otimes \id)
(\Hecke_{\sF}\otimes \id)\Big(\coact(\sG) \otimes \sE\Big).
\end{gathered}
\]

We now observe that $\coact$ is a morphism of $\QCoh(\LocSys_{\check{G}})$-bimodules,
considering $D(\Bun_G)$ as a bimodule via the spectral action and symmetric
monoidality of $\QCoh(\LocSys_{\check{G}})$.
Therefore, we can rewrite the above as:
\[
(\id \otimes \Gamma(\LocSys_{\check{G}},-))
((\id \times i_x^!) \otimes \id)
(\Hecke_{\sF}\otimes \id)\coact(\sE \star \sG).
\]

\noindent By the big diagram of \S \ref{sss:strategy}, this term coincides with:
\[
\sF_x \star (\sE \star \sG).
\]

\noindent Therefore, it suffices to show that this term vanishes.

\subsubsection{}\label{sss:ker-submod}

By the above, it remains to show that 
$\Ker(\sF_x \star -) \subset D(\Bun_G)$ is 
a $\QCoh(\LocSys_{\check{G}})$-submodule category. 
Reformulating this using Theorem \ref{t:vanishing},
it suffices to show that it is a $\Rep(\check{G})_{\Ran}$-submodule category. 
I.e., we wish to show that for any $\sV \in \Rep(\check{G})_{\Ran}$,
$\sF_x \star \sV \star \sG = 0$.

As $\Rep(\check{G})_{\Ran}$ is generated as a monoidal category by
its subcategory $\Rep(\check{G})_X$, we can assume $\sV$ lies
in this subcategory. By excision, we can treat separately the
cases where $\sV$ is $*$-extended from $\Rep(\check{G})_{X\setminus x}$
and $\Rep(\check{G})_x$. In the former case, it follows as 
$\sV$ commutes with $\sF_x$ (Hecke functors at different points obviously
commute). In the latter case, it follows as $\sV$ commutes with
$\sF_x$, e.g., by the existence of the 
\emph{pointwise} symmetric monoidal structure on the
derived Satake category established in \cite{bezrukavnikov-finkelberg}.

This concludes the proof of \eqref{eq:st-1}.

\subsection{Step 2}\label{ss:st-2}

We now prove \eqref{eq:st-2}. This requires some digressions.

\subsubsection{Lisse sheaves}

Suppose $\sY$ is an Artin stack. 

We define 
$\Lisse_{\sY}(X) \subset \QCoh(\sY) \otimes D(X)$ to be the full
DG subcategory generated by under colimits by (finite rank)
vector bundles on $\sY \times X_{dR}$. We consider objects
of $\Lisse_{\sY}(X)$ as \emph{$\sY$-families of lisse $D$-modules on $X$}.

Let $x \in X(k)$.
We abuse notation in letting $i_x^!$ denote the composition:
\[
\Lisse_{\sY}(X) \into \QCoh(\sY) \otimes D(X) \xar{\id \otimes i_x^!} \QCoh(\sY) \otimes 
\Vect =
\QCoh(\sY).
\]

We will use the following result.

\begin{prop}

Suppose $\sY$ is locally almost of finite type and eventually coconnective.
Then the functor $i_x^!$ is conservative. 

More generally, for any dualizable DG category $\sC$, the functor:
\[
\id_{\sC} \otimes i_x^!: \sC \otimes \Lisse_{\sY}(X) \to \sC \otimes \QCoh(\sY)
\]

\noindent is conservative.

\end{prop}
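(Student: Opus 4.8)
The plan is to reduce the general (families-with-$\sC$) statement to the bare statement $i_x^!$ conservative on $\Lisse_{\sY}(X)$, and then to prove the latter by exploiting the defining generation property of $\Lisse_{\sY}(X)$ together with the structure of lisse $D$-modules on a connected curve. For the reduction: since $i_x^!$ on $\Lisse_{\sY}(X)$ is induced by the colocalization $(\id\otimes i_x^!)\colon \QCoh(\sY)\otimes D(X)\to \QCoh(\sY)$, and since tensoring with a fixed dualizable $\sC$ preserves the relevant limits and colimits, the functor $\id_\sC\otimes i_x^!$ is the restriction to $\sC\otimes\Lisse_{\sY}(X)$ of $\id_\sC\otimes(\id\otimes i_x^!)$; I would argue that conservativity of the ``undecorated'' version is inherited, e.g.\ by testing on the generators $c\boxtimes M$ with $c\in\sC$ arbitrary and $M$ a generator of $\Lisse_{\sY}(X)$, or more robustly by using that a conservative functor between presentable stable categories admitting a left (or right) adjoint with dense image remains conservative after $-\otimes\sC$ for $\sC$ dualizable. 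So the crux is the case $\sC=\Vect$.

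For the bare statement, the key point is that an object $\sE\in\Lisse_{\sY}(X)$ with $i_x^!\sE=0$ must vanish. First I would record that the functor $i_x^!\colon \Lisse_{\sY}(X)\to\QCoh(\sY)$ has a natural interpretation: under the identification of $\Lisse_{\sY}(X)$ with a category of $\sY$-families of local systems on $X$, $i_x^!$ is (up to a cohomological shift by $1$, coming from the $!$-versus-$*$ convention for the fiber at a point of a curve) the ``fiber at $x$'' functor. Now the strategy is: (1) reduce to the case where $\sY$ is nice enough --- here ``eventually coconnective and locally almost of finite type'' is used so that $\QCoh(\sY)$ is compactly generated, or at least well-behaved enough to run a descent/t-structure argument; (2) use that $\Lisse_{\sY}(X)$ is generated under colimits by vector bundles on $\sY\times X_{dR}$, equivalently by flat connections on vector bundles pulled back along $\sY\times X\to\sY$ possibly twisted; (3) for such generating objects, $i_x^!$ is essentially the underlying vector bundle on $\sY$ shifted, which is faithful; (4) bootstrap from generators to all objects using that $i_x^!$ commutes with colimits and that $\Lisse_{\sY}(X)$ is, locally on $\sY$ in a suitable sense, controlled by its heart.

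The cleanest route to (4), and the one I would actually pursue, is monodromy-theoretic. Over a geometric point, or after base change to a cover trivializing $\pi_1$, an $\sY$-family of lisse $D$-modules on the connected curve $X$ is the same as an $\sY$-family of representations of (a suitable version of) $\pi_1(X,x)$ on its fiber at $x$ --- this is precisely the Tannakian/Riemann--Hilbert philosophy alluded to in the introduction via the Nadler--Yun circle of ideas, though here in the de Rham setting over a point we only need the elementary statement that a flat connection on a vector bundle over $X$ is determined by its fiber at one point together with monodromy data, and in particular is zero iff its fiber at $x$ is zero. Thus $i_x^!$ is conservative because the data of a family of lisse sheaves is recovered from (a faithful functor applied to) the fiber at $x$. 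The main obstacle will be bookkeeping the derived and stacky subtleties: one must make sense of ``family of local systems over a stack $\sY$ that is only eventually coconnective and locally almost of finite type,'' ensure the generation-by-vector-bundles hypothesis is used correctly (so that one is not merely proving conservativity on the heart but on the whole DG category), and handle the shift/convention issue in $i_x^!$ so that ``$i_x^!\sE=0$'' genuinely forces the fiber to vanish; for this last point I would phrase everything in terms of the $*$-fiber, note $i_x^!\simeq i_x^*[-2]$ on lisse objects on a curve (or $[-1]$ depending on normalization), and then invoke conservativity of the $*$-fiber, which is the honestly geometric input.
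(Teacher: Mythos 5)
Your broad outline matches the paper's: reduce to $\sC = \Vect$, reduce to $\sY$ a (field-valued) point, then handle lisse $D$-modules on a curve. But there are genuine gaps at each of the three steps, and one of your suggested mechanisms is incorrect.

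\emph{The reduction to $\sC = \Vect$.} Your suggestion to ``test on the generators $c\boxtimes M$'' does not work: conservativity is a statement about \emph{all} objects killed by the functor, not just decomposable ones, and an arbitrary object of $\sC\otimes\Lisse_\sY(X)$ need not be a colimit of objects $c\boxtimes M$ in a way that lets you conclude anything pointwise. Your ``more robust'' alternative (citing a conservativity-descent statement under $-\otimes\sC$ for functors with a dense-image adjoint) invokes a nonstandard fact and imposes hypotheses the paper does not need. The paper's argument is cleaner and unconditional: for dualizable $\sC$ one has $\sC\otimes\sD_i = \TwoHom_{\DGCat_{cont}}(\sC^\vee,\sD_i)$, and post-composition with a conservative functor $F$ is manifestly conservative on functor categories (an arrow $\alpha$ of functors is an isomorphism iff each $\alpha_c$ is, and $F$ detects this). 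No adjoints or generators are needed.

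\emph{The reduction to $\sY$ a point.} You write ``locally on $\sY$ in a suitable sense'' and allude to covers trivializing $\pi_1$, but the cover in question would be of $X$, not of $\sY$, and this doesn't address the actual problem of passing to fibers over $\sY$. The paper's mechanism is concrete: take a flat affine cover $S\to\sY$, and use that for $S$ eventually coconnective and locally almost of finite type, the functor $\QCoh(S)\to\prod_{s\in|S|}\QCoh(\kappa(s))$ given by $*$-restriction to all scheme-theoretic points is conservative (Lurie, SAG Lemma 2.6.1.3, plus conservativity of restriction along $S^{cl}\hookrightarrow S$). This is exactly where the hypotheses on $\sY$ enter, which your sketch does not locate. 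Applying Step~1 again with $\sD = D(X)$ (or $\Lisse$) then reduces to $\sY = \Spec(\kappa)$, and a base-change-of-field observation reduces further to $\kappa = k$.

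\emph{The case $\sY = \Spec(k)$.} You correctly observe that the abelian-category statement (a lisse $D$-module on a connected curve vanishes iff its fiber at one point vanishes) is the geometric input, and you flag that the remaining obstacle is bootstrapping from the heart to the whole DG category — but you do not say how to do so. The paper closes this by noting that $i_x^!:\Lisse(X)\to\Vect$ is $t$-exact up to shift, conservative on the heart, and that the $t$-structure on $\Lisse(X)$ is left separated; together these give conservativity on the nose. Without the left-separatedness input (or some substitute), the heart-level statement alone does not imply the derived statement, so this is a real gap rather than bookkeeping. (Your remark about the $!$-versus-$*$ shift is fine and matches the paper's implicit normalization, but it is the $t$-structure argument, not monodromy or Tannakian reconstruction, that the paper actually uses at this point.)
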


\begin{proof}

\step 

First, we note that if $F:\sD_1 \to \sD_2 \in \DGCat_{cont}$ is 
conservative and $\sC \in \DGCat_{cont}$ is dualizable, then
$\id_{\sC} \otimes F:\sC \otimes \sD_1 \to \sC \otimes \sD_2$ is conservative.
Indeed, we can rewrite this functor as:
\[
\sC \otimes \sD_1 = 
\TwoHom_{\DGCat_{cont}}(\sC^{\vee},\sD_1) \xar{\vph \mapsto F\vph}
\TwoHom_{\DGCat_{cont}}(\sC^{\vee},\sD_2) = 
\sC \otimes \sD_2
\]

\noindent in which form it is manifestly conservative. 
Therefore, we are reduced to considering $\sC = \Vect$ in the assertion.

\step 

Next, suppose $S$ 
is an eventually coconnective scheme locally almost of finite type.
Let $|S|$ denote the set of points of its underlying topological space;
for $s \in |S|$, we write $\kappa(s)$ for the residue field at this point,
$s$ for $\Spec(\kappa(s))$,
and $i_s:s \to S$ for the structural morphism.

We then note that the functor: 
\[
\QCoh(S) \xar{\{i_s^*\}_{s \in |S|}} \prod_{s \in |S|} \QCoh(s)
\]

\noindent is conservative. Indeed, this follows from \cite{sag} Lemma 2.6.1.3
and the conservativeness of the restriction $S^{cl} \into S$ (which
is easy from eventual coconnectivity of $S$).

In our setting, let $\pi:S \to \sY$ be a flat cover. 
We find that the restriction functor:
\[
\QCoh(\sY) \to \prod_{s \in |S|} \QCoh(s)
\]

\noindent is conservative. 
By the same reasoning as before, for any dualizable DG category $\sD$, 
the functor:
\[
\sD \otimes \QCoh(\sY) \to \sD \otimes \prod_{s \in |S|} \QCoh(s) \isom 
\prod_{s \in |S|} \sD \otimes \QCoh(s)
\]

\noindent is conservative. In particular, this applies for $\sD = D(X)$.

\step 

By the above, we have a commutative diagram:
\[
\begin{tikzcd}
\Lisse_{\sY}(X) \arrow[r,hookrightarrow] \ar[d] 
& 
D(X) \otimes \QCoh(\sY) 
\arrow[d] 
\arrow[rr,"i_x^! \otimes \id"] 
&& \QCoh(\sY) \ar[d]
\\
\prod_{s \in |S|} \Lisse_s(X) \arrow[r,hookrightarrow]
&
\prod_{s \in |S|} D(X) \otimes \QCoh(s) 
\ar[rr,"i_x^! \otimes \id"] && \prod_{s \in |S|} \QCoh(s).
\end{tikzcd}
\]

\noindent The middle and right vertical arrows are conservative, 
so the same is true of the left vertical arrow. Therefore, to see
that the top line is conservative, it suffices to show that 
for each $s \in |S|$, the functor:
\[
i_x^!:\Lisse_s(X) \to \QCoh(s)
\]

\noindent is conservative.

Therefore, we are reduced to the case where $S = \Spec(\kappa)$
for some field $\kappa/k$.

\step 

Let $X_\kappa \coloneqq X \times_{\Spec(k)} \Spec(\kappa)$. Note
that $D(X) \otimes \Vect_{\kappa} = D_{/\kappa}(X_\kappa)$, where we
regard $X_\kappa$ as a scheme over the field $\kappa)$
and write $D_{/\kappa}$ to emphasize this (reminding that
implicitly, the category of $D$-modules depends on the structural
map to $\Spec$ of a field).
Moreover, $X_{dR} \times \Spec(\kappa) = X_{\kappa,dR/\Spec(\kappa)}$,
so $\Lisse_{\Spec(\kappa)}(X) \subset D_{/\kappa}(X_\kappa)$ 
is the subcategory of ($\Spec(\kappa)$-families of) 
lisse $D$-modules on $X_{\kappa}$, considering the latter as 
a scheme over $\Spec(\kappa)$. 

This is all to say that we are reduced to the case where $\kappa = k$,
as the only difference is notational.

\step 

We are now essentially done: the functor 
$i_x^!:\Lisse(X) \coloneqq \Lisse_{\Spec(k)}(X) \to \Vect$
is $t$-exact up to shift and is obviously conservative on the heart
of the $t$-structure, so is conservative (as the $t$-structure on
$\Lisse(X)$ is left separated).

\end{proof}

\subsubsection{}

We now observe the following.

\begin{lem}\label{l:qula-lisse}

For any quasi-ULA $\sF$, the composition:
\[
\begin{gathered}
D(\Bun_G) \xar{\coact} D(\Bun_G) \otimes \QCoh(\LocSys_{\check{G}}) 
\xar{\Hecke_{\sF} \otimes \id} D(\Bun_G \times X) \otimes \QCoh(\LocSys_{\check{G}})
= \\ D(\Bun_G) \otimes D(X) \otimes \QCoh(\LocSys_{\check{G}})
\end{gathered}
\]

\noindent maps into the subcategory:
\[
D(\Bun_G) \otimes \Lisse_{\LocSys_{\check{G}}}(X).
\]

\end{lem}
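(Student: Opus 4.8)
The statement is that for quasi-ULA $\sF$, the composite $(\Hecke_{\sF} \otimes \id)\circ \coact$ lands in $D(\Bun_G) \otimes \Lisse_{\LocSys_{\check{G}}}(X)$. The strategy is to reduce to the case where $\sF$ comes from the naive Satake functor applied to a ULA object of $\Rep(\check G)_X$, and there to exhibit the relevant $D$-module on $X$ explicitly as (a family of) lisse sheaves by tracking the construction of $\Hecke_{\sF}$ against the spectral action.

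First I would reduce to $\sF$ almost ULA, hence by definition a finite colimit of direct summands of objects $\sS_X(V)$ for $V \in \Rep(\check G)_X$ ULA over $X$; since all the functors in sight are exact and the target subcategory $\Lisse_{\LocSys_{\check G}}(X)$ is closed under finite colimits and summands (being generated under \emph{all} colimits by vector bundles on $\LocSys_{\check G} \times X_{dR}$, in particular a cocomplete subcategory), and the quasi-ULA case then follows by passing to filtered colimits, it suffices to treat $\sF = \sS_X(V)$. Next I would use that $\coact$ is a map of $\QCoh(\LocSys_{\check G})$-bimodules (as already noted in \S\ref{ss:st-1}) together with Theorem~\ref{t:vanishing}: the key point is that the Ran-space Hecke action of $\sS_\Ran(V)$ for $V \in \Rep(\check G)_X$ factors, after applying $\Loc$, through the action of the object $\Loc(V) \in \QCoh(\LocSys_{\check G} \times X)$ — more precisely, the functor $\Hecke_{\sS_X(V)}: D(\Bun_G) \to D(\Bun_G \times X)$ is intertwined, via $\coact$, with the operation of tensoring by the image of $V$ under $\Rep(\check G)_X \to \QCoh(\LocSys_{\check G}) \otimes D(X)$. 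Since $V$ is ULA over $X$, its image on $\LocSys_{\check G} \times X_{dR}$ is a vector bundle (the "universal" local system attached to $V$, i.e. the tautological $\check G$-local system evaluated on $V$), which is exactly the kind of object generating $\Lisse_{\LocSys_{\check G}}(X)$. Hence $(\Hecke_{\sS_X(V)} \otimes \id)\coact(\sG)$ is obtained from $\coact(\sG) \in D(\Bun_G) \otimes \QCoh(\LocSys_{\check G})$ by the $\QCoh(\LocSys_{\check G})$-linear operation of $\otimes$-ing with a vector bundle on $\LocSys_{\check G} \times X_{dR}$, which lands in $D(\Bun_G) \otimes \Lisse_{\LocSys_{\check G}}(X)$ as desired.

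The main obstacle — and where I would spend the most care — is making precise the compatibility between the \emph{geometric} Hecke functor $\Hecke_{\sF}$ (built in \S2 from the Ran-space action and the $D(X^I)$-linear structure) and the \emph{spectral} description via $\Loc$ and the universal local system. Concretely one must check that the square relating "act by $\sS_\Ran(V)$" and "tensor by $\Loc(V)$" is compatible with the $D(X)$-linear (equivalently, the $X$-family) structure on both sides, so that fiberwise at $x$ it recovers the identification already implicit in \S\ref{ss:st-1} (where $\sF_x \star (\sE \star \sG)$ was rewritten via $\coact(\sE \star \sG)$). This is essentially the statement that Gaitsgory's spectral action is itself factorizable/compatible with the naive Satake functor over $X^I$ — but crucially only the \emph{one-point} version over $X$ is needed here, which is part of the content of \cite{generalized-vanishing} (Theorem~\ref{t:vanishing}) and does not require factorizable derived Satake. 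I would therefore phrase the argument so as to use only: (i) $\coact$ is $\QCoh(\LocSys_{\check G})$-bilinear; (ii) the $\Rep(\check G)_X$-action on $D(\Bun_G)$ factors through $\QCoh(\LocSys_{\check G}) \otimes D(X)$ compatibly with $\Loc$; and (iii) $V$ ULA $\Rightarrow \Loc_X(V)$ a vector bundle on $\LocSys_{\check G} \times X_{dR}$. Granting these, the lemma is a formal consequence.
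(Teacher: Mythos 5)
Your proposal takes essentially the same approach as the paper: reduce to almost ULA $\sF$ of the form $\sS_X(V)$, then use $\QCoh(\LocSys_{\check G})$-bilinearity of $\coact$ and the compatibility between the geometric Hecke functor and $\Loc$ to rewrite $(\Hecke_{\sS_X(V)}\otimes\id)\coact$ as tensoring $\coact(-)$ by the spectral object attached to $V$, which lands in $\Lisse_{\LocSys_{\check G}}(X)$. The one place you wave your hands is ingredient (iii) — that $\Loc_X(V)$ is a vector bundle on $\LocSys_{\check G}\times X_{dR}$ for general ULA $V\in\Rep(\check G)_X$ — which the paper justifies by a further devissage (almost ULA $\Rightarrow$ bounded $\Rightarrow$ heart $\Rightarrow$ direct sums of $\sS_X(V\otimes\sigma)$ with $V\in\Rep(\check G)^{\heart}$ finite-dimensional and $\sigma$ a finite-rank local system, using the structure theory from \cite{cpsi} Prop.\ 6.22.1 and Lemma 6.23.1), reducing to the manifest fact that $\Loc_X(V\otimes\omega_X)=\sE_V$ is the universal vector bundle; so your outline is correct but you should be aware that (iii) requires exactly this structure theorem rather than being automatic from the definition of ULA.
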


\begin{proof}

First, note that:
\[
D(\Bun_G) \otimes \Lisse_{\LocSys_{\check{G}}}(X) \to 
D(\Bun_G) \otimes D(X) \otimes \QCoh(\LocSys_{\check{G}})
\]

\noindent is indeed fully faithful: e.g., the embedding
$\Lisse_{\LocSys_{\check{G}}}(X) \into 
D(X) \otimes \QCoh(\LocSys_{\check{G}})$ admits a continuous right adjoint
by definition, so tensoring with it preserves fully faithfulness.

Now, by definition of quasi-ULAness, we are immediately reduced to 
considering the case where $\sF$ is almost ULA. Such an object is cohomologically
bounded, so we are reduced to the case where $\sF$ is concentrated
in degree zero. 

In this case, $\sF$ necessarily is a direct sum of 
terms of the form $\sS_X(V \otimes \sigma)$ where
$\sigma \in D(X)^{\heart}$ is a finite rank local system, 
$V \in \Rep(\check{G})^{\heart}$ is finite dimensional, 
we consider $V \otimes \sigma$ as an object of $\Rep(\check{G})_X$, 
and we remind that $\sS_X$ denotes the geometric Satake functor
(cf. \cite{cpsi} \S 6, especially Proposition 6.22.1 and Lemma 6.23.1).

Now observe that $\Hecke_{\sS_X(V \otimes \sigma)}$ differs from 
$\Hecke_{\sS_X(V \otimes \omega_X)}$ by applying
$\id_{D(\Bun_G)} \otimes (\sigma \overset{!}{\otimes} -) \otimes 
\id_{\QCoh(\LocSys_{\check{G}})}$. Clearly this operation
preserves the subcategory 
$D(\Bun_G) \otimes \Lisse_{\LocSys_{\check{G}}}(X)$, so 
we may take $\sF = \sS_X(V \otimes \omega_X)$ instead.  We simplify the
notation by writing $\sF = \sS_X(V)$.

Next, recall that $V \in \Rep(\check{G})$ defines a canonical
vector bundle $\sE_V$ on $X_{dR} \otimes \LocSys_{\check{G}}$.
We then observe that the compositions:
\[
D(\Bun_G) \xar{\coact} D(\Bun_G) \otimes \QCoh(\LocSys_{\check{G}}) 
\xar{\Hecke_{\sS(V)} \otimes \id} 
D(\Bun_G) \otimes D(X) \otimes \QCoh(\LocSys_{\check{G}})
\]

\noindent and: 
\[
\begin{gathered}
D(\Bun_G) \xar{\coact} D(\Bun_G) \otimes \QCoh(\LocSys_{\check{G}}) 
\xar{\id \otimes \sE_V \otimes \id} \\
D(\Bun_G) \otimes D(X) \otimes \QCoh(\LocSys_{\check{G}})
\otimes \QCoh(\LocSys_{\check{G}}) 
\xar{\id \otimes \id \otimes (- \otimes -)} \\
D(\Bun_G) \otimes D(X) \otimes \QCoh(\LocSys_{\check{G}})
\end{gathered}
\]

\noindent coincide (by construction\footnote{Specifically, 
we use the following fact, which is tautological from the construction of $\Loc$.
Suppose $\sM \in D(X)$. We obtain an object $V \otimes \sM \in \Rep(\check{G})_X$.
Let $\lambda_{\sM}:D(X) \to \Vect$ be the functor Verdier dual to $\sM$,
i.e., the functor $C_{dR}^{\dot}(X,\sM\overset{!}{\otimes} -)$.  
Then $\Loc(V \otimes \sM) \in \QCoh(\LocSys_{\check{G}})$ 
is (functorially in $\sM$) calculated as the image of
$\sE_V$ under $\lambda_{\sM} \otimes \id:D(X) \otimes \QCoh(\LocSys_{\check{G}})
\to \QCoh(\LocSys_{\check{G}})$.}
of $\Loc$). The latter clearly maps into 
$D(\Bun_G) \otimes 
\Lisse_{\LocSys_{\check{G}}}(X)$, as desired.

\end{proof}

\subsubsection{}

By Lemma \ref{l:qula-lisse}, we have:
\[
(\Hecke_{\sF}\otimes \id)\coact(\sG) \in 
D(\Bun_G) \otimes \Lisse_{\LocSys_{\check{G}}}(X).
\]

\noindent Moreover, by \eqref{eq:st-1}, this object
vanishes when we apply $(\id \otimes i_x^!)$ to it.
Therefore, by Lemma \ref{l:lisse-cons}, we have:
\[
(\Hecke_{\sF}\otimes \id)\coact(\sG) = 0.
\]

\noindent Here we observe that $D(\Bun_G)$ is dualizable by
\cite{bung-cpt}, and that $\LocSys_{\check{G}}$ is eventually
coconnective e.g. by \cite{arinkin-gaitsgory} \S 10.
This concludes the proof of \eqref{eq:st-2}, hence of
Theorem \ref{t:lisse-ker}.

\bibliography{bibtex.bib}{}
\bibliographystyle{alphanum}

\end{document}